\documentclass[a4paper,12pt]{article}

\usepackage[margin=1.2in]{geometry}

\usepackage{amsmath}
\usepackage{algorithmic}
\usepackage{algorithm}
\usepackage{amsthm}
\usepackage{amsfonts}
\usepackage{comment}
\usepackage{graphicx}
\usepackage{hyperref}
\usepackage{mathrsfs}
\usepackage{color}

\newtheorem{theorem} {Theorem}
\newtheorem{lemma} {Lemma}

\newtheorem{remark} {Remark}
\newtheorem{conjecture} {Conjecture}

\def\sA{{\mathscr{A}}}

\def\m{{\mathbf{m}}}
\def\x{{\mathbf{x}}}
\def\c{{\mathbf{c}}}
\def\g{{\mathbf{g}}}
\def\e{{\mathbf{e}}}

\def\v{{\mathbf{v}}}
\def\z{{\mathbf{z}}}
\def\w{{\mathbf{w}}}
\def\y{{\mathbf{y}}}

\def\b{{\mathbf{b}}}
\def\X{{\mathbf{X}}}

\def\A{{\mathbf{A}}}

\def\I{{\mathbf{I}}}

\def\Z{{\mathbf{Z}}}

\def\G{{\mathbf{G}}}

\newcommand{\mA}{\mathcal{A}}

\newcommand{\mC}{\mathcal{C}}

\newcommand{\mB}{\mathcal{B}}
\newcommand{\mbS}{\mathbb{S}}

\newcommand{\E}{\mathbb{E}}

\newcommand{\trace}{\textrm{Tr}}

\newcommand{\reals}{\mathbb{R}}

\title{From Oja's Algorithm to the Multiplicative Weights Update Method with Applications}
\date{}
\author{Dan Garber \\ Technion - Israel Institute of Technology \\ \small{dangar@technion.ac.il}}

\begin{document}

 \maketitle

\begin{abstract}
Oja's algorithm is a well known online algorithm studied mainly in the context of stochastic principal component analysis. We make a simple observation, yet to the best of our knowledge a novel one, that when applied  to a any (not necessarily stochastic) sequence of  symmetric matrices which share common eigenvectors, the regret of Oja's algorithm could be directly bounded in terms of the regret of the well known multiplicative weights update method for the problem of prediction with expert advice. Several applications to optimization with quadratic forms over the unit sphere in $\reals^n$ are discussed. 
\end{abstract}

\section{Introduction}
Fast approximation of a leading eigenvector of a large real symmetric matrix is one of the most important computational primitives which underlies several important applications, such as principal component analysis and the pagerank algorithm, and is also a fundamental sub-procedure in many optimization algorithms, in particular in the context of semidefinite optimization. Fast iterative approximation methods include the well known power  method and the often faster yet also more involved Lanczos-type algorithms, as well as other related methods \cite{golub2013matrix, saad2011numerical}.
Such methods, which are very efficient for solving a very specific large scale non-convex optimization problem --- globally  maximizing a quadratic form over the unit sphere in $\reals^n$ without any further assumptions on the input matrix, admit a highly specialized analysis which often relies only on elementary arguments in linear algebra, such as in the case of the power method \cite{golub2013matrix, saad2011numerical}. While some of these methods could in principle also be seen as first-order methods of continuous optimization for solving the associated non-convex optimization problem, their convergence analysis, which guarantees rapid convergence to the global optimal solution (with high probability), often does not easily fit with the theory of first-order methods for non-convex problems, which mostly focuses on convergence only to certain stationarity criteria and do not yield globally optimal solutions \cite{beck2017first, pmlr-v48-shamira16, beck2018globally}. Such methods have also been studied from a Riemannian optimization point of view which leads to similar convergence rates to that of the power method \cite{xu2018convergence, ding2021tight}, however their analysis is substantially more involved.

In recent years, the stochastic variant of the leading eigenvector problem, in which the input matrix $\A$ is not observed directly, but only a finite sequence of i.i.d. samples $\A_1,\dots,\A_T$ such that $\E[\A_t]=\A$ for all $t\in[T]$ is observed, has received significant interest. Oja's algorithm \cite{oja1982simplified, oja1985stochastic}, see Algorithm \ref{alg:oja} below, which can be seen as an instance of stochastic gradient ascent (w.r.t. the maximization of the objective function $f(\z) = \z^{\top}\A\z$ over the unit sphere), was proposed for this setting and was studied extensively in recent years \cite{balsubramani2013fast, pmlr-v48-shamirb16, allen2017follow}.

Let us now consider a seemingly very different problem than computing leading eigenvectors. In the problem of prediction with expert advice \cite{littlestone1994weighted, freund1997decision}, a decision maker (DM) is iteratively required to choose an expert among $n$ possibilities. At the end of each round, each expert incurs a scalar loss and the DM incurs the loss of the expert she chose on that round. The goal of the DM is to achieve cumulative loss (throughout all rounds,, whose number is finite and assumed to be known in advanced) not much larger than that of the best expert in hindsight. The difference between the overall loss of the DM and that of the best expert is  called regret, and the goal is to guarantee that it grows only sub-linearly with the number of rounds $T$ for any sequence of bounded losses (e.g., each loss is in $[-1,1]$). The multiplicative weights update method (MW) \cite{arora2012multiplicative} is one of the most well known algorithms for this problem. MW maintains a positive weight for each expert, which is updated each round according to the loss of the expert, and prediction on each round is made by randomly selecting an expert according to the distribution induced by the weights, see Algorithm \ref{alg:mw} below. For losses bounded in $[-1,1]$, MW guarantees expected regret of the order $\sqrt{T\log{n}}$, and the beautiful proof of this result is both very short and highly elementary, see for instance Theorem 2.1 in  \cite{arora2012multiplicative}.

In this short communication we consider Oja's algorithm as an online optimization algorithm, that is, the input sequence of matrices $\A_1,\dots,\A_T$ should not be thought of as i.i.d. samples, but as some arbitrary sequence in the space of real symmetric matrices $\mbS^n$. We make an observation that when these inputs  share common eigenvectors, by considering a simple transformation, the dynamics of Oja's algorithm could be shown to correspond exactly to the MW algorithm with a particular sequence of loss vectors. This observation immediately yields a regret bound for Oja's algorithm by leveraging the well known regret bound of MW. As a first straightforward application, we obtain provable convergence of Oja's algorithm for (deterministic) leading eigenvector approximation that follows directly from this regret bound, as opposed to typical power method-style analysis. This establishes a somewhat surprising connection between the two fundamental yet very distinct problems of eigenvector computation and prediction with expert advice, and offers a fresh view on fundamental eigenvalue computations which follows from elementary arguments. 
As a more advanced application we show how this result in fact applies to more general optimization problems involving convex minimization with certain quadratic forms over the unit sphere, yielding a globally convergent (with high probability) and highly efficient first-order method for these nonconvex problems. Finally, we also make a connection back to online learning, and discuss implications to the possibility of designing  highly efficient algorithms to an online learning version of the leading eigenvector problem studied in recent years \cite{pmlr-v37-garberb15, nie2016online, allen2017follow, pmlr-v99-carmon19a, garber2019regret}.



\begin{minipage}{0.47\textwidth}
\begin{algorithm}[H]
	\caption{Oja's Algorithm}\label{alg:oja}
	\begin{algorithmic}
		\STATE \textbf{Input:} step-size $\mu\ge0$ 
		\STATE \textbf{Init:} $\w_1 \gets$ rand. unit vector in $\reals^n$
		\FOR{$t = 1,2,...$}
		\STATE observe gain matrix $\A_t\in\mbS^n$
		\STATE $\z_{t}\gets \w_t / \Vert{\w_t}\Vert_2$ 
            \STATE $\w_{t+1} \gets (\I+\mu\A_t)\w_t$   
			
        \ENDFOR
	\end{algorithmic}
\end{algorithm}
\end{minipage}
\hfill
\begin{minipage}{0.47\textwidth}
\begin{algorithm}[H]
	\caption{Multiplicative Weights}\label{alg:mw}
	\begin{algorithmic}
		\STATE \textbf{Input:} step-size $\eta\ge0$ 
		\STATE \textbf{Init:} $\phi_1\in\reals_{>0}^n$
		\FOR{$t = 1,2,...$} 
		\STATE observe loss vector $\m_t\in\reals^n$
			\STATE $\phi_{t+1}(i) \gets \phi_t(i)(1-\eta{}\m_t(i))$  for all $i=1,\dots,n$
        \ENDFOR
	\end{algorithmic}
\end{algorithm}
\end{minipage}

\subsection{Notation}
We let $\reals_{>0}^n$ denote the restriction of $\reals^n$ to vectors with positive entries. We use lower-case boldface letters to denote vectors in $\reals^n$ and upper-case boldface letters to denote matrices. Scalars are denoted by lightface letters. For real matrices we let $\Vert{\cdot}\Vert_2$ denote the spectral norm (largest singular value) and for vectors in $\reals^n$ we let $\Vert{\cdot}\Vert_2, \Vert{\cdot}\Vert_1, \Vert{\cdot}\Vert_{\infty}$ denote the Euclidean norm, the $\ell_1$ norm, and the $\ell_{\infty}$ norm, respectively.

\section{Regret bound for Oja's Algorithm under Common Eigenvectors Assumption}
We begin our study with stating the well known regret bound for MW (Algorithm \ref{alg:mw}).
The following theorem is taken from \cite{arora2012multiplicative} with the obvious change of notation and with the minor change that while \cite{arora2012multiplicative} assumes $\phi_1$ is the all-ones vector, we let it be an arbitrary vector of positive entries. The modification to the (short and well known) proof is straightforward and thus we omit it. 
\begin{theorem}[Theorem 2.1 in \cite{arora2012multiplicative}]\label{thm:mw}
Consider Algorithm \ref{alg:mw} with learning rate $\eta\in[0,1/2]$ and with $\m_t(i)\in[-1,1]$ for all $i\in[n]$, $t\in[T]$. Then, for all $i\in[n]$ the following regret bound holds:
\begin{align*}
\sum_{t=1}^T\left({\left({\frac{\phi_t}{\Vert{\phi_t}\Vert_1}}\right)^{\top}\m_t -\m_t(i)}\right) \leq \eta\sum_{t=1}^T\vert{\m_t(i)}\vert + \frac{\ln\left({\Vert{\phi_1}\Vert_1\phi_1(i)^{-1}}\right)}{\eta}.
\end{align*}
\end{theorem}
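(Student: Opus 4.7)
The plan is to carry out the standard potential-function proof of MW, with the only adjustment being that $\phi_1$ is no longer required to be the all-ones vector. Let $\Phi_t := \Vert{\phi_t}\Vert_1$ denote the total weight and $\p_t := \phi_t/\Phi_t$ the induced distribution. Summing the update $\phi_{t+1}(i) = \phi_t(i)(1-\eta\m_t(i))$ over $i$ yields the one-step recursion $\Phi_{t+1} = \Phi_t(1 - \eta\p_t^{\top}\m_t)$, which iterates to
\[
\Phi_{T+1} \;=\; \Phi_1\prod_{t=1}^T(1-\eta\p_t^{\top}\m_t).
\]
Because $\eta\leq 1/2$ and $|\m_t(i)|\leq 1$, every factor is strictly positive, so applying $\ln(1-x)\leq -x$ termwise produces the clean upper bound $\ln\Phi_{T+1} \leq \ln\Phi_1 - \eta\sum_{t=1}^T\p_t^{\top}\m_t$.

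For a matching lower bound I would track a single coordinate $i$: positivity of all factors gives $\phi_{T+1}(i) = \phi_1(i)\prod_{t=1}^T(1-\eta\m_t(i)) \leq \Phi_{T+1}$, and taking logs bounds $\ln\Phi_{T+1}$ from below by $\ln\phi_1(i) + \sum_t\ln(1-\eta\m_t(i))$.

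The main obstacle is obtaining a clean lower bound on $\ln(1-\eta\m_t(i))$ when the losses can take either sign, since a naive Taylor expansion yields a quadratic term of the form $\m_t(i)^2$ whose sum is not what we want. The standard fix is a case split based on convexity: if $\m_t(i)\in[0,1]$, convexity of $x\mapsto(1-\eta)^x$ on $[0,1]$ gives $1-\eta\m_t(i)\geq(1-\eta)^{\m_t(i)}$, while if $\m_t(i)\in[-1,0]$, convexity of $x\mapsto(1+\eta)^x$ gives $1-\eta\m_t(i)\geq(1+\eta)^{-\m_t(i)}$. Combined with the elementary estimates $\ln(1-\eta)\geq-\eta-\eta^2$ (valid on $\eta\in[0,1/2]$) and $\ln(1+\eta)\geq\eta-\eta^2$ (valid on $\eta\in[0,1]$), the two cases collapse into the uniform bound
\[
\sum_{t=1}^T\ln(1-\eta\m_t(i)) \;\geq\; -\eta\sum_{t=1}^T\m_t(i) \;-\; \eta^2\sum_{t=1}^T|\m_t(i)|.
\]
The critical cancellation is that the first-order contributions from the two cases glue to $-\eta\sum_t\m_t(i)$ regardless of sign, while the quadratic remainders glue to $-\eta^2\sum_t|\m_t(i)|$, exactly reproducing the $|\m_t(i)|$ appearing in the theorem.

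Chaining the upper and lower bounds on $\ln\Phi_{T+1}$, rearranging, and dividing through by $\eta$ then delivers the claimed inequality, with the logarithmic term appearing precisely in the form $\ln(\Vert{\phi_1}\Vert_1\phi_1(i)^{-1})/\eta$. The generalization from $\phi_1 = \mathbf{I}$ in \cite{arora2012multiplicative} to an arbitrary $\phi_1 \in \reals_{>0}^n$ enters the proof solely through this log factor and requires no further modification.
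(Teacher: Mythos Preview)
Your proof is correct and is precisely the standard potential-function argument from \cite{arora2012multiplicative} that the paper cites; the paper itself omits the proof, noting only that the modification to arbitrary $\phi_1\in\reals_{>0}^n$ is straightforward, and your write-up carries out exactly that modification, with the change surfacing only in the $\ln(\Vert\phi_1\Vert_1\phi_1(i)^{-1})$ term. The case split via convexity of $(1\pm\eta)^x$ together with $\ln(1-\eta)\geq-\eta-\eta^2$ and $\ln(1+\eta)\geq\eta-\eta^2$ is the same device used in \cite{arora2012multiplicative}, so there is nothing to add.
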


Consider now a finite sequence of symmetric matrices $\{\A_t\}_{t=1}^T\subset\mbS^n$ which all share the same full set of orthonormal eigenvectors $\v_1,\dots,\v_n$, and suppose further that $\Vert{\A_t}\Vert_2 \leq 1$ for all $t\in[T]$. That is, for every $t\in[T]$, we can write the eigen-decomposition of $\A_t$ as $\A_t = \sum_{i=1}^n\lambda_{t}(i)\v_i\v_i^{\top}$, where $\lambda_t\in\reals^n$ with $\Vert{\lambda_{t}}\Vert_{\infty} \leq 1$.

Denote for all $t\in[T]$ and $i\in[n]$, $\psi_t(i) = (\v_i^{\top}\w_t)^2$. Using the update step of Algorithm \ref{alg:oja} we have for all $t\in[T]$ and $i\in[n]$,
\begin{align}\label{eq:oja2mw:1}
\psi_{t+1}(i) & = \left({\v_i^{\top}(\I+\mu\A_t)\w_t}\right)^2 =(\v_i^{\top}\w_t )^2(1+\mu\lambda_{t}(i))^2 \nonumber \\
&= \psi_t(i)\left({1+\mu\left({2\lambda_{t}(i)+\mu\lambda_{t}(i)^2}\right)}\right).
\end{align}
Suppose $\mu = \frac{\eta}{C}$ for some $\eta\in[0,1/2]$ and $C>0$ such that for all $t\in[T]$ and $i\in[n]$,
\begin{align*}
\frac{2\lambda_{t}(i)+\frac{\eta}{C}\lambda_{t}(i)^2}{C}\in[-1,1].
\end{align*}

In particular, under the assumption above that $\Vert{\lambda_t}\Vert_{\infty} \leq 1$,  we can  conveniently set $C=3$, meaning $\mu\in[0,1/6]$.

Now, from Eq. \eqref{eq:oja2mw:1} we clearly see that the sequence $(\psi_t)_{t\geq 1}$ follows the same dynamics as the sequence $(\phi_t)_{t\geq 1}$ in Algorithm \ref{alg:mw} with step-size $\eta$ and losses:
\begin{align*}
\m_t(i) = - \frac{2\lambda_{t}(i)+\frac{\eta}{3}\lambda_{t}(i)^2}{3} \quad \forall t\in[T],~i\in[n].
\end{align*}
Applying Theorem \ref{thm:mw} and slightly rearranging we immediately obtain that for all $i\in[n]$,
\begin{align*}
-\sum_{t=1}^T\left({\frac{\psi_t}{\Vert{\psi_t}\Vert_1}}\right)^{\top}\lambda_t + \sum_{t=1}^T\lambda_{t}(i) &\leq \frac{\eta}{2}\sum_{t=1}^T\left|{2\lambda_{t}(i)+\frac{\eta}{3}\lambda_{t}(i)^2}\right| \\
&~+ \frac{\eta}{6}\sum_{t=1}^T\left({\left({\frac{\psi_t}{\Vert{\psi_t}\Vert_1}}\right)^{\top}\lambda_t^2 - \lambda_{t}(i)^2}\right) \\
&~+ \frac{3\ln\left({\Vert{\psi_1}\Vert_1\psi_1(i)^{-1}}\right)}{2\eta},
\end{align*}
where $\lambda_t^2\in\reals^n$ is the vector whose entries are the squares of $\lambda_t$.

Recalling that $\eta = 3\mu$, $\eta\in[0,1/2]$, and  $\Vert{\lambda_t}\Vert_{\infty}\leq 1$ for all $t\in[T]$, the above bound could be simplified as follows:
\begin{align}\label{eq:oja2mw:2}
-\sum_{t=1}^T\left({\frac{\psi_t}{\Vert{\psi_t}\Vert_1}}\right)^{\top}\lambda_t + \sum_{t=1}^T\lambda_{t}(i) &\leq 3\mu\sum_{t=1}^T\left|{\lambda_{t}(i)}\right| + \frac{\mu}{2}\sum_{t=1}^T\Vert{\lambda_t}\Vert_{\infty}^2 \nonumber \\
&~+ \frac{\ln\left({\Vert{\psi_1}\Vert_1\psi_1(i)^{-1}}\right)}{2\mu}.
\end{align}

Recall that $\psi_t(i) = (\v_i^{\top}\w_t)^2$. Thus, $\Vert{\psi_t}\Vert_1 = \Vert{\w_t}\Vert_2^2$ and in particular $\Vert{\psi}\Vert_1 = 1$. Moreover,
\begin{align*}
\left({\frac{\psi_t}{\Vert{\psi_t}\Vert_1}}\right)^{\top}\lambda_t = \sum_{i=1}^n\frac{(\v_i^{\top}\w_t)^2\lambda_{t}(i)}{\Vert{\w_t}\Vert_2^2}
= \frac{1}{\Vert{\w_t}\Vert_2^2}\w_t^{\top}\left({\sum_{i=1}^n\lambda_{t}(i)\v_i\v_i^{\top}}\right)\w_t = \z_t^{\top}\A_t\z_t.
\end{align*}

Plugging these observations into Eq. \eqref{eq:oja2mw:2} and recalling that $\lambda_t(i) = \v_i^{\top}\A_t\v_i$ for all $t\in[T], i\in[n]$, we immediately obtain the following result.
\begin{theorem}\label{thm:oja}
Consider Algorithm \ref{alg:oja} in case the matrices $\A_1,\dots,\A_T\in\mbS^n$ share a common full set of orthonormal eigenvectors $\v_1,\dots,\v_n$, and have eigenvalues bounded in $[-1,1]$. Then, for any step-size $\mu\in[0,1/6]$ and any $i\in[n]$, we have
\begin{align*}
\sum_{t=1}^T\left({\v_i^{\top}\A_t\v_i-\z_t^{\top}\A_t\z_t}\right)  &\leq \mu\sum_{t=1}^T\left({3\left|{\v_i^{\top}\A_t\v_i}\right| + \frac{1}{2}\Vert{\A_t}\Vert_{2}^2}\right) - \frac{\ln\left({(\z_1^{\top}\v_i)^2}\right)}{2\mu}.
\end{align*}
\end{theorem}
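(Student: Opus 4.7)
The plan is to reduce Oja's algorithm, restricted to the common-eigenvector setting, directly to an instance of MW (Algorithm \ref{alg:mw}) and then invoke Theorem \ref{thm:mw}. The key change of variables is to track the squared projections of the unnormalized iterate onto the common eigenvectors, since squaring turns the additive/geometric Oja update $(\I+\mu\A_t)$ into a coordinatewise multiplicative update of exactly the MW form.

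Concretely, for each $i\in[n]$ and $t\in[T]$ I would define $\psi_t(i) = (\v_i^{\top}\w_t)^2$. Using $\w_{t+1} = (\I + \mu\A_t)\w_t$ together with the shared-eigenvector assumption (which implies $\v_i^{\top}\A_t = \lambda_t(i)\v_i^{\top}$ with $\lambda_t(i) = \v_i^{\top}\A_t\v_i$), a direct expansion gives
\begin{align*}
\psi_{t+1}(i) = \psi_t(i)\bigl(1 + 2\mu\lambda_t(i) + \mu^2\lambda_t(i)^2\bigr).
\end{align*}
This is exactly the MW weight update if I identify $\eta\m_t(i) = -(2\mu\lambda_t(i) + \mu^2\lambda_t(i)^2)$. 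With the choice $\mu = \eta/3$ and $\eta\in[0,1/2]$ (so $\mu\in[0,1/6]$), the resulting loss satisfies $\vert\m_t(i)\vert\le 1$ because $\vert\lambda_t(i)\vert \le \Vert\A_t\Vert_2 \le 1$, which is what Theorem \ref{thm:mw} requires.

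Next, I would apply Theorem \ref{thm:mw} to $\psi_t$ and translate back. Two identifications carry the work: orthonormality of the $\v_i$ gives $\Vert\psi_t\Vert_1 = \sum_i(\v_i^{\top}\w_t)^2 = \Vert\w_t\Vert_2^2$, and dividing through yields
\begin{align*}
\Bigl(\tfrac{\psi_t}{\Vert\psi_t\Vert_1}\Bigr)^{\top}\lambda_t = \frac{1}{\Vert\w_t\Vert_2^2}\w_t^{\top}\Bigl(\sum_i\lambda_t(i)\v_i\v_i^{\top}\Bigr)\w_t = \z_t^{\top}\A_t\z_t.
\end{align*}
Since $\w_1$ is a unit vector, $\Vert\psi_1\Vert_1 = 1$ and $\psi_1(i) = (\z_1^{\top}\v_i)^2$, which produces the $-\frac{\ln((\z_1^{\top}\v_i)^2)}{2\mu}$ term in the target bound.

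The only delicate step is the final clean-up: the MW bound involves $\sum_t\vert\m_t(i)\vert$ and a $\psi_t$-weighted term, both quadratic in $\lambda_t$. I would absorb these into the stated form via the crude estimates $\vert 2\lambda_t(i) + (\eta/3)\lambda_t(i)^2\vert \le 3\vert\lambda_t(i)\vert$ (valid since $\eta/3\le 1/6$ and $\vert\lambda_t(i)\vert\le 1$) and $\lambda_t(i)^2\le\Vert\A_t\Vert_2^2$ to control the second-order correction, then substitute back $\eta = 3\mu$. I expect this bookkeeping — rather than any conceptual step — to be the main obstacle, but it is purely mechanical once the MW reduction above has been set up.
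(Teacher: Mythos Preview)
Your proposal is correct and follows essentially the same route as the paper: define $\psi_t(i)=(\v_i^{\top}\w_t)^2$, observe that the Oja update becomes the MW update with $\eta=3\mu$ and losses $\m_t(i)=-\tfrac{1}{3}(2\lambda_t(i)+\tfrac{\eta}{3}\lambda_t(i)^2)$, apply Theorem~\ref{thm:mw}, and translate back via $\Vert\psi_t\Vert_1=\Vert\w_t\Vert_2^2$ and $(\psi_t/\Vert\psi_t\Vert_1)^{\top}\lambda_t=\z_t^{\top}\A_t\z_t$. The only remark is on the final bookkeeping: your crude estimate $\vert 2\lambda_t(i)+(\eta/3)\lambda_t(i)^2\vert\le 3\vert\lambda_t(i)\vert$, once multiplied by $\eta/2=3\mu/2$, yields a leading constant $9\mu/2$ rather than the $3\mu$ in the statement; to match the stated constants exactly, use the triangle-inequality split $\eta\vert\lambda_t(i)\vert+\tfrac{\eta^2}{6}\lambda_t(i)^2$ (as the paper does) instead of collapsing both terms into $3\vert\lambda_t(i)\vert$.
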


\subsection{A slightly improved MW-inspired analysis}
The well known proof for the MW algorithm (see \cite{arora2012multiplicative}) is based on the potential  $\Phi_{T+1} := \sum_{i=1}^n\phi_{T+1}(i)$ (where for all $t\in[T]$, $\phi_{t}$ is as defined in Algorithm \ref{alg:mw}) which is used to derive lower and upper bounds on the cumulative loss of the algorithm and that of each expert. In  case of the analysis above we have $\Phi_{T+1} = \sum_{i=1}^n\psi_{T+1}(i) = \sum_{i=1}^n(\v_i^{\top}\w_{T+1})^2 = \Vert{\w_{T+1}}\Vert_2^2$. This suggests analyzing the regret of Oja's algorithm directly in terms of the potential $\Vert{\w_{T+1}}\Vert_2^2$. Indeed, mimicking the simple idea in the proof of Theorem \ref{thm:mw} (deriving upper and lower bounds on $\Vert{\w_{T+1}}\Vert_2^2$), we can write
\begin{align*}
\Vert{\w_{T+1}}\Vert_2^2 &= \Vert{\w_T + \eta\A_T\w_T}\Vert_2^2  = \Vert{\w_T}\Vert_2^2\Vert{\z_T + \mu\A_T\z_T}\Vert_2^2\\
 &= \Vert{\w_T}\Vert_2^2\left({1 + 2\mu\z_T^{\top}\A_T\z_T + \mu^2\z_T\A_T^2\z_T}\right) \\
 &\leq \Vert{\w_T}\Vert_2^2{}\exp\left({2\mu\z_T^{\top}\A_T\z_T + \mu^2\z_T\A_T^2\z_T}\right).
\end{align*}
Unrolling the recursion, recalling that $\Vert{\w_1}\Vert_2 =1$, and taking the natural logarithm on both sides, we obtain
\begin{align}\label{eq:newAnal:1}
\log\left({\Vert{\w_{T+1}}\Vert_2^2}\right) \leq \mu\sum_{t=1}^T2\z_t^{\top}\A_t\z_t + \mu\z_t^{\top}\A_t^2\z_t.
\end{align}

Let $\v\in\reals^n$ be a unit vector, and assume  it is a common eigenvector of all  matrices $\A_1,\dots,\A_T$. Then,
\begin{align*}
(\v^{\top}\w_{T+1})^2 &= \left({\v^{\top}\left({\w_T + \mu\A_T\w_T}\right)}\right)^2 = \left({\v^{\top}\w_T + \mu\v^{\top}\A_t\v\v^{\top}\w_T}\right)^2 \\
&= (\v^{\top}\w_T)^2(1+\mu\v^{\top}\A_T\v)^2.
\end{align*}

Unrolling the recursion  and taking the natural logarithm on both sides, we obtain
\begin{align}\label{eq:newAnal:2}
\log\left({\Vert{\w_{T+1}}\Vert_2^2}\right) & \geq \log\left({(\v^{\top}\w_{T+1})^2}\right) \nonumber \\
&= \log\left({(\v^{\top}\w_1)^2}\right) + 2\sum_{t=1}^T\log\left({1+\mu\v^{\top}\A_t\v}\right) \nonumber \\
&\geq \log\left({(\v^{\top}\z_1)^2}\right) + 2\mu\sum_{t=1}^T\left({\v^{\top}\A_t\v - \mu\left({\v^{\top}\A_t\v}\right)^2}\right),
\end{align}
where in the last inequality we have used the assumption that $\mu\in[0,1/2]$, $\Vert{\A_t}\Vert_2 \leq 1$, and the inequality $\log(1+x) \geq x-x^2$ for $x\in[0,1/2]$.

Combining Eq. \eqref{eq:newAnal:1} and \eqref{eq:newAnal:2} we obtain,
\begin{align*}
\sum_{t=1}^T\z_t^{\top}\A_t\z_t &\geq  \sum_{t=1}^T\v^{\top}\A_t\v  - \frac{\mu}{2}\sum_{t=1}^T\z_t^{\top}\A_t^2\z_t  - \mu\sum_{t=1}^T(\v^{\top}\A_t\v)^2 + \frac{1}{2\mu}\log\left({(\v^{\top}\z_1)^2}\right) \\
&\geq  \sum_{t=1}^T\v^{\top}\A_t\v - \frac{3\mu}{2}\sum_{t=1}^T\Vert{\A_t}\Vert_2^2 + \frac{1}{2\mu}\log\left({(\v^{\top}\z_1)^2}\right). 
\end{align*}
This leads to the following theorem which slightly improves upon Theorem \ref{thm:oja} by not only providing slightly better constants, but also in slightly relaxing the assumption that the inputs $\A_1,\dots,\A_T$ share a common full set of orthonormal eigenvectors, to only assuming that the comparator vector $\v$ is a mutual eigenvector.
\begin{theorem}\label{thm:oja2}
Consider running Algorithm \ref{alg:oja} w.r.t. input matrices $\A_1,\dots,\A_T$ in $\mbS^n$ that have eigenvalues in $[-1,1]$ and share a common unit eigenvector $\v$. Then, for any step-size $\mu\in[0,1/2]$ it holds that
\begin{align*}
\sum_{t=1}^T\left({\v^{\top}\A_t\v-\z_t^{\top}\A_t\z_t}\right) &\leq  \frac{3\mu}{2}\sum_{t=1}^T\Vert{\A_t}\Vert_2^2 - \frac{1}{2\mu}\log\left({(\v^{\top}\z_1)^2}\right).
%
\end{align*}
\end{theorem}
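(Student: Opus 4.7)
The plan is to mimic the classical MW potential argument, but using the squared norm $\Vert{\w_t}\Vert_2^2$ as the potential, and then derive matching upper and lower bounds on $\log\Vert{\w_{T+1}}\Vert_2^2$. The key structural advantage over Theorem \ref{thm:oja} is that we no longer need to decompose $\w_t$ in a common eigenbasis — we only exploit the eigenvector property of the single comparator $\v$.

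For the upper bound, I would expand $\Vert{\w_{t+1}}\Vert_2^2 = \Vert{\w_t}\Vert_2^2\bigl(1 + 2\mu\z_t^{\top}\A_t\z_t + \mu^2\z_t^{\top}\A_t^2\z_t\bigr)$ using the definition of $\z_t$, apply $1+x \leq e^x$ inside the parenthesis, telescope over $t=1,\dots,T$, and take logs. This yields
\[
\log\Vert{\w_{T+1}}\Vert_2^2 \leq 2\mu\sum_{t=1}^T\z_t^{\top}\A_t\z_t + \mu^2\sum_{t=1}^T\z_t^{\top}\A_t^2\z_t,
\]
using $\Vert{\w_1}\Vert_2 = 1$. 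This step requires no eigenvector assumption on $\{\A_t\}$.

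For the lower bound, I would use that $\v$ is an eigenvector of every $\A_t$ with eigenvalue $\alpha_t := \v^{\top}\A_t\v$, so $\v^{\top}\w_{t+1} = (1+\mu\alpha_t)\v^{\top}\w_t$ and hence $(\v^{\top}\w_{T+1})^2 = (\v^{\top}\w_1)^2\prod_{t=1}^T(1+\mu\alpha_t)^2$. Taking logs and invoking $\log(1+x) \geq x - x^2$ on $x = \mu\alpha_t \in [-1/2, 1/2]$ (valid since $\mu\leq 1/2$ and $|\alpha_t|\leq\Vert{\A_t}\Vert_2\leq 1$), together with the trivial $\Vert{\w_{T+1}}\Vert_2^2\geq(\v^{\top}\w_{T+1})^2$, would give
\[
\log\Vert{\w_{T+1}}\Vert_2^2 \geq \log(\v^{\top}\z_1)^2 + 2\mu\sum_{t=1}^T\alpha_t - 2\mu^2\sum_{t=1}^T\alpha_t^2.
\]

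Finally, I would subtract the two bounds, divide by $2\mu$, and simplify using $\z_t^{\top}\A_t^2\z_t \leq \Vert{\A_t}\Vert_2^2$ and $\alpha_t^2 \leq \Vert{\A_t}\Vert_2^2$, which collapses the two quadratic sums into $\tfrac{3\mu}{2}\sum_t\Vert{\A_t}\Vert_2^2$ and leaves the stated regret bound. The main delicate point is the lower bound step: one must make sure the range of $\mu\alpha_t$ stays within the validity window of $\log(1+x)\geq x-x^2$, which is precisely where the step-size restriction $\mu\in[0,1/2]$ is consumed. Aside from this, the argument is essentially bookkeeping — no spectral decomposition of $\A_t$ is ever invoked globally, which is exactly what gives the strengthening over Theorem \ref{thm:oja}.
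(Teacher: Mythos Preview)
Your proposal is correct and matches the paper's own argument essentially step for step: the potential $\Vert{\w_t}\Vert_2^2$, the upper bound via $1+x\le e^x$, the lower bound via $(\v^{\top}\w_{T+1})^2$ and $\log(1+x)\ge x-x^2$, and the final collapse of the quadratic terms into $\tfrac{3\mu}{2}\sum_t\Vert{\A_t}\Vert_2^2$. If anything, your treatment of the validity window for $\log(1+x)\ge x-x^2$ is slightly more careful than the paper's, which states $x\in[0,1/2]$ where $x\in[-1/2,1/2]$ is what is actually needed.
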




\section{Applications}
For all applications discussed we will need to lower-bound the term $(\z_1^{\top}\v)^2$ appearing in Theorem \ref{thm:oja2}. This is carried out using the following  standard argument.
\begin{lemma}[Lemma 5 in \cite{arora2009expander}]\label{lem:corr}
If $\z\in\reals^n$ is a random unit vector sampled from the uniform distribution over the unit sphere, then for any unit vector $\v$ and any $\delta\in(0,1)$, it holds with probability at least $1-\delta$ that $(\z^{\top}\v)^2 \geq \frac{\delta}{9n}$.
\end{lemma}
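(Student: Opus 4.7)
My plan is to reduce to a one-dimensional problem via rotational invariance of the uniform measure, and then apply elementary Gaussian anti-concentration and concentration.

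By rotational invariance, for any fixed unit vector $\v\in\reals^n$ the scalar $y := \z^{\top}\v$ has the same distribution as the first coordinate of a uniformly random unit vector, so it suffices to lower bound $y^2$. I would use the standard Gaussian representation $\z = \g/\Vert{\g}\Vert_2$ with $\g = (g_1,\ldots,g_n)\sim N(0,\I_n)$, giving $y^2 = g_1^2/(g_1^2 + W)$, where $g_1\sim N(0,1)$ is independent of $W := \sum_{i=2}^n g_i^2 \sim \chi^2_{n-1}$. The task then reduces to lower-bounding $g_1^2$ and upper-bounding $\Vert{\g}\Vert_2^2$ simultaneously with total failure probability at most $\delta$.

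For the numerator, Gaussian anti-concentration (the $N(0,1)$ density is bounded above by $1/\sqrt{2\pi}$) yields $\Pr[|g_1|\leq s]\leq s\sqrt{2/\pi}$, so a suitable choice of $s = \Theta(\delta)$ gives $g_1^2 = \Omega(\delta^2)$ with probability at least $1-\delta/2$. For the denominator, Markov's inequality applied to $\Vert{\g}\Vert_2^2$ (whose mean is $n$), or a sharper $\chi^2$ tail bound, gives $\Vert{\g}\Vert_2^2 = O(n)$ with probability at least $1-\delta/2$. A union bound then produces a lower bound on $y^2$ of order $1/n$ times a polynomial in $\delta$, matching the stated $\delta/(9n)$ form up to constant tuning.

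The only subtle point is tracking the constants carefully enough to recover the explicit $1/9$ factor. A slightly cleaner alternative bypasses Gaussians entirely: $y^2$ follows a $\mathrm{Beta}(1/2,(n-1)/2)$ distribution, whose density near zero is bounded by $c_n = \Gamma(n/2)/(\sqrt{\pi}\,\Gamma((n-1)/2))\leq\sqrt{n/(2\pi)}$ via Gautschi's inequality $\Gamma(n/2)/\Gamma((n-1)/2)\leq\sqrt{n/2}$. Direct integration then yields $\Pr[y^2\leq t]\leq\sqrt{2tn/\pi}$ for all $t\in[0,1]$, from which the claimed bound follows upon choosing $t$ appropriately.
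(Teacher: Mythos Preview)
The paper does not prove this lemma; it merely cites it. So there is no in-paper proof to compare against, and the only question is whether your argument stands on its own.

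Your Beta-distribution route is the right one, and the CDF estimate $\Pr[y^2\le t]\le\sqrt{2nt/\pi}$ is correct. (One small slip of wording: the $\mathrm{Beta}(1/2,\cdot)$ density is \emph{not} bounded near zero --- it blows up like $t^{-1/2}$ --- but what you actually use, and what is true, is that it is bounded by $c_n\,t^{-1/2}$, which after integrating gives exactly your CDF bound.) The gap is in your closing sentence. Plugging $t=\delta/(9n)$ into your bound gives
\[
\Pr\!\left[y^2\le \tfrac{\delta}{9n}\right]\ \le\ \sqrt{\tfrac{2\delta}{9\pi}},
\]
and this is $\le\delta$ only when $\delta\ge 2/(9\pi)\approx 0.071$. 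For smaller $\delta$ the inequality does not close, and in fact it cannot: the lemma as stated is incorrect for small $\delta$. For example, with $n=2$ one has exactly $\Pr[y^2\le t]=\tfrac{2}{\pi}\arcsin\sqrt{t}$, and at $\delta=0.01$ this yields $\Pr[y^2\le \delta/18]\approx 0.015>\delta$. The sharp high-probability statement your analysis actually proves is $(\z^\top\v)^2\ge c\,\delta^2/n$, and that is the right order. Fortunately, every downstream use in the paper puts this quantity inside a logarithm, so replacing $\delta$ by $\delta^2$ only changes constants and leaves all the results intact.

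Your first (Gaussian) approach, as sketched, is weaker still: splitting the failure probability and controlling $\Vert\g\Vert_2^2$ by Markov's inequality costs an additional factor of $\delta$ in the denominator bound, so the union step yields $y^2\ge c\,\delta^3/n$ rather than $\delta^2/n$. Hence ``matching up to constant tuning'' is not accurate there either; the Beta computation is the one to keep.
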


\subsection{Leading eigenvalue approximation}\label{sec:EVapprox}
Consider Algorithm \ref{alg:oja} in the case that for all $t\in[T]$, $\A_t = \A$, for some $\A\in\mbS^n$ with $\Vert{\A}\Vert_2\leq 1$, and denote by $\v_1$ a leading eigenvector of $\A$ associated with the largest (signed) eigenvalue $\lambda_1(\A)$ . Clearly, in this setting Oja's algorithm with step-size $\mu\in[0,1]$ corresponds exactly to the well-known \textit{power method} \cite{golub2013matrix, saad2011numerical} when applied to the positive semidefinite matrix $\I+\mu\A$ for which $\v_1$ is also a leading eigenvector. Theorems  \ref{thm:oja} and \ref{thm:oja2} provide an alternative  to the standard power method analysis, immediately yielding an approximation guarantee for $\lambda_1(\A)$. Consider running Algorithm \ref{alg:oja} for $T$ iterations with step-size $\mu\in[0,1/2]$ and picking the iterate $\z_t$ for which the product $\z_t^{\top}\A\z_t$ is maximized. Theorem \ref{thm:oja2} implies that
\begin{align*}
\max_{t\in[T]}\z_t^{\top}\A\z_t \geq \frac{1}{T}\sum_{t=1}^T\z_t^{\top}\A\z_t \geq \lambda_1(\A) - \frac{3\mu}{2} + \frac{\ln((\v_1^{\top}\z_1)^2)}{2\mu{}T}.
\end{align*}

Using Lemma \ref{lem:corr}, we have that for any $\delta\in(0,1)$, setting $\mu = \sqrt{\frac{\ln\left({9n/\delta}\right)}{3T}}$ and assuming $T$ is large enough (so that $\mu \leq 1/2$), it holds with probability at least $1-\delta$ that
\begin{align}\label{eq:mwEV}
\max_{t\in[T]}\z_t^{\top}\A\z_t \geq \lambda_1(\A) - \sqrt{\frac{3\ln\left({9n/\delta}\right)}{T}}.
\end{align}

In particular, to reach $\epsilon$ additive approximation with probability at least $1-\delta$, $T\geq \lceil{3\epsilon^{-2}\ln\left({9n/\delta}\right)}\rceil$ iterations are required.

\begin{remark}
The approximation error in \eqref{eq:mwEV} scales (logarithmically) with the dimension $n$. Interestingly, this worst-case dependence is well known for the MW algorithm (see for instance Theorem 2.1 in \cite{arora2012multiplicative}), but also comes up in standard analysis of the power method with random uniform initialization (see for instance Theorem 2.1 in \cite{garber2015fast}). In fact, it was shown in  \cite{simchowitz2018tight} that such dependence is unavoidable in worst case for leading eigenvector approximation in the matrix-vector product oracle model.  
\end{remark}

\begin{remark}
While the classical analysis of the power method is often simplified by the assumption that $\lambda_1(\A) > \lambda_2(\A)$, which leads to a typical $\frac{\lambda_1(\A)}{\lambda_1(\A)-\lambda_2(\A)}\log\frac{1}{\epsilon}$ iteration complexity (omitting for simplicity the dependence on the probability of success and assuming $\A$ is positive semidefinite) to find a $(1-\epsilon)$ approximation \cite{golub2013matrix, saad2011numerical}, here with our MW-based analysis of Oja's algorithm, we naturally obtain a result for the ``gap-free'' regime in which no assumption on the uniqueness of the leading eigenvalue is made, but the iteration complexity scales polynomially with $1/\epsilon$ and not logarithmically.
\end{remark}
\begin{remark}
While standard analysis of the power method  \cite{golub2013matrix, saad2011numerical} naturally leads to convergence rates w.r.t. the last iterate, our MW-based analysis naturally leads to regret-style convergence rates which hold w.r.t. the average of iterates  (which in turn imply rates w.r.t. to the best iterate in terms of the objective, as in Eq. \eqref{eq:mwEV}).
\end{remark}






\subsection{Convex minimization with quadratic mapping over the unit sphere}
Let $\mA:\reals^n\rightarrow\reals^m$ be a quadratic map of the form: $(\mA(\x))_i = \x^{\top}\A_i\x$, $i=1,\dots,m$, where  throughout the discussion we assume that $\A_1,\dots,\A_m\in\mbS^n$ and share a common set of orthonormal eigenvectors. We further assume that $\Vert{\A_i}\Vert_2\leq 1$ for all $i\in[m]$. Let $g:\reals^m\rightarrow\reals$ be convex with bounded subgradients over $[-1,1]^m$. Consider the optimization problem
\begin{align}\label{eq:optprob}
\min_{\x\in\reals^n:\Vert{\x}\Vert_2=1}\{f(\x) := g(\mA(\x))\}.
\end{align}
We assume $g(\cdot)$ is given by a first-order oracle, i.e., for any $\y\in[-1,1]^m$ we can compute some $\g\in\partial{}g(\y)$, where $\partial{}g(\x)$ denotes the subdifferential set of $g$ at $\y$. We assume $\mA$ is also given by a first order oracle that given some $\z\in\reals^n$, returns the vectors $\A_1\z,\dots,\A_m\z$ (it is a first-order oracle in the sense that it is equivalent to taking the gradient of each $h_i(\x) := \frac{1}{2}\x^{\top}\A_i\x$, $i=1,\dots,m$).

Clearly, approximating the leading eigenvalue of a symmetric matrix $\A$ could be written in this form by taking $\mA(\x) = \x^{\top}\A\x$ and $g(x) = -x$. Similarily, the problem of finding a unit vector which satisfies a qaudratic equation of the form $\x^{\top}\A\x = b$ could be written using the same mapping $\mA(\x) = \x^{\top}\A\x$ and $g(x) = (x-b)^2$. Another simple example is that of finding a unit vector $\x$ satisfying $\x^{\top}\A\x\in[a,b]$ for some given scalars $b\geq a$. In this case we take the same operator $\mA$ as before and the function $g(x) = \max\{0,a-x\} + \max\{0,x-b\}$. A more complex example could be of recovering a unit vector $\x$ which (approximately) satisfies a system of equations w.r.t. the magnitudes of its projections onto some unknown orthonormal basis, i.e., find unit vector $\x$ which (approximately) satisfies
\begin{align*}
\sum_{j=1}^n\lambda_i(j)(\v_j^{\top}\x)^2 = \b(i), \quad i=1,\dots,m,
\end{align*}
where $\lambda_i(j), \v_i, i\in[m], j\in[n]$ are not explicitly given, but instead are coded via matrices $\A_1,\dots,\A_m$ such that $\A_i = \sum_{j=1}^n\lambda_i(j)\v_i\v_i^{\top}$, and  for any $\x$, the LHS of equation $i$ above could be evaluated by computing $\x^{\top}\A_i\x$. This leads (for instance) to the optimization problem
\begin{align*}
\min_{\x\in\reals^n:\Vert{\x}\Vert_2=1}\sum_{i=1}^m\vert{\x^{\top}\A_i\x-\b(i)}\vert,
\end{align*}
which is also captured by the model \eqref{eq:optprob}.

As a final example, consider the problem of finding a degree-$k$ polynomial  of some $\A\in\mbS^n$ with coefficients in some convex and compact set $\mC\subset\reals^{k+1}$ with maximal largest eigenvalue. Towards this end, consider the optimization problem:
\begin{align}\label{eq:polyprob}
\max_{\x\in\reals^n:\Vert{\x}\Vert_2=1}\max_{\c\in\mC}\x^{\top}\left({\c(1)\I + \sum_{i=1}^k\c(i+1)\A^i}\right)\x.
\end{align}

This problem could be written in the form \eqref{eq:optprob} by taking $m=k+1$ and $\A_i = \A^{i-1}$ for all $i\in[m]$, and $g(\y) = -\min_{\c\in\mC}(-\c^{\top}\y)$, which is convex in $\y$. If $\x^*$ is an optimal solution to \eqref{eq:polyprob}, then any $\c^*\in\arg\max_{\c\in\mC}\c^{\top}\mA(\x^*)$ corresponds to the coefficients of a polynomial with maximal largest eigenvalue.

Central to our analysis of Problem \eqref{eq:optprob} would be the following standard semidefinite relaxation:
\begin{align}\label{eq:sdprelax}
\min_{\X\in\mbS^n: \X\succeq 0, \trace(\X)=1}\{F(\X):=g(\sA(\X))\},
\end{align}
where $\sA:\reals^{n\times n}\rightarrow\reals^m$ is the linear mapping $(\sA(\X))(i) = \trace(\X\A_i)$, $i=1,\dots,m$.

As we shall see, the relaxation \eqref{eq:sdprelax} is tight for Problem \eqref{eq:optprob}, in the sense that under our assumption that $\A_1,\dots,\A_m$ have common eigenvectors, \eqref{eq:sdprelax} always admits a rank-one optimal solution (though this solution need not be one of these eigenvectors).

Our Oja-inspired algorithm for Problem  \eqref{eq:optprob} is given below as Algorithm \ref{alg:ojaconv}. It is simply an application of Oja's algorithm (Algorithm \ref{alg:oja}) with a sequence of matrices that corresponds to (minus) subgradients of $F$ (as defined in \eqref{eq:sdprelax}), evaluated at the rank-one matrices corresponding to the iterates of algorithm. Hence, the algorithm is a certain coupling of Oja's algorithm and the subgradient descent method for the convex relaxation \eqref{eq:sdprelax}. Note that Algorithm \ref{alg:ojaconv} (just like Algorithm \ref{alg:oja}) requires to maintain a single vector in memory, and does not require direct access to the underlying matrices $\A_1,\dots,\A_m$, but only via a matrix-vector product (one per matrix per iteration). 

\begin{algorithm}[H]
	\caption{Oja's algorithm meets subgradient descent}\label{alg:ojaconv}
	\begin{algorithmic}
		\STATE \textbf{Input:} step-size $\mu\ge0$, Lipchitz parameter $G\geq 0$ 
		\STATE \textbf{Init:} $\w_1 \gets$ rand unit vector in $\reals^n$
		\FOR{$t = 1,2,...$}
		\STATE $\z_{t}\gets \w_t / \Vert{\w_t}\Vert_2$ 
		\STATE compute $\g_t\in\partial{}g(\mA(\z_t))$ and set $\G_t = -\frac{1}{G}\sum_{i=1}^m\g_t(i)\A_i$
                  \STATE $\w_{t+1} \gets (\I+\mu\G_t)\w_t$   			
        \ENDFOR
	\end{algorithmic}
\end{algorithm}

\begin{theorem}
Consider Problem \eqref{eq:optprob} in case $\A_1,\dots,\A_m\in\mbS^n$ share a full set of orthonormal eigenvectors and $\max_{i\in[m]}\Vert{\A_i}\Vert_2 \leq 1$. Fix $\delta\in(0,1)$. Suppose Algorithm \ref{alg:ojaconv} is run for $T$ iterations with parameter $G$ such that $G \geq \sup_{\x\in[-1,1]^m}\sup_{\g\in\partial{}g(\x)}\Vert{\sum_{i=1}^m\g(i)\A_i}\Vert_2$, with step-size $\mu = \sqrt{\frac{1}{3T}\ln\left({9n^2/\delta}\right)}$, and assume $T$ is sufficiently large so that $\mu \leq 1/2$. Then, with probability at least $1-\delta$ it holds that
\begin{align*}
\min_{t\in[T]}f(\z_t) - \min_{\x\in\reals^n:\Vert{\x}\Vert_2=1}f(\x) \leq G\sqrt{\frac{3\ln\left({9n^2\delta}\right)}{T}}.
\end{align*}
\end{theorem}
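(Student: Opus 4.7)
The plan is to couple the regret bound of Theorem \ref{thm:oja2} with a standard subgradient argument for the convex SDP relaxation \eqref{eq:sdprelax}. First I would establish two structural properties of the sequence $\G_1,\dots,\G_T$ generated by Algorithm \ref{alg:ojaconv}. The choice of $G$ directly ensures $\Vert{\G_t}\Vert_2\le 1$. Moreover, since $\G_t$ is a real linear combination of the matrices $\A_1,\dots,\A_m$, which by assumption share the full orthonormal eigenbasis $\v_1,\dots,\v_n$, each $\v_j$ is simultaneously an eigenvector of every $\G_t$. Hence Theorem \ref{thm:oja2} is applicable to the random sequence produced by the algorithm, with any comparator of the form $\v=\v_j$.

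Next I would reduce the primal optimum of \eqref{eq:optprob} to a convex combination of the common eigenvectors. Because $\x^{\top}\A_i\x$ depends only on the magnitudes $|\v_j^{\top}\x|^2$, we may assume an optimal solution of the form $\x^\star=\sum_{j=1}^n\sqrt{p_j^\star}\,\v_j$ with $p^\star\in\reals^n$ a probability vector. Set $\X^\star=\x^\star(\x^\star)^{\top}$ and note $F(\z_t\z_t^{\top})=f(\z_t)$ and $F(\X^\star)=f(\x^\star)$. Convexity of $g$ together with the linearity of $\sA$ implies that $-G\,\G_t=\sum_{i=1}^m\g_t(i)\A_i \in \partial F(\z_t\z_t^{\top})$, so the subgradient inequality gives
\begin{align*}
f(\z_t)-f(\x^\star) \leq G\bigl((\x^\star)^{\top}\G_t\x^\star \;-\; \z_t^{\top}\G_t\z_t\bigr).
\end{align*}
Since $\G_t$ is diagonalized by $\v_1,\dots,\v_n$, the off-diagonal (cross) terms vanish and $(\x^\star)^{\top}\G_t\x^\star = \sum_{j}p_j^\star\,\v_j^{\top}\G_t\v_j$, so the right-hand side is a convex combination, with weights $p^\star$, of the per-eigenvector gaps $\v_j^{\top}\G_t\v_j-\z_t^{\top}\G_t\z_t$.

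Now I would sum over $t$ and bound each per-eigenvector cumulative gap using Theorem \ref{thm:oja2} with $\v=\v_j$. Recalling $\Vert{\G_t}\Vert_2\le 1$, this yields
\begin{align*}
\sum_{t=1}^T\bigl(\v_j^{\top}\G_t\v_j-\z_t^{\top}\G_t\z_t\bigr) \leq \frac{3\mu T}{2} - \frac{1}{2\mu}\ln\bigl((\v_j^{\top}\z_1)^2\bigr).
\end{align*}
To control the initialization term uniformly across the $n$ comparators, I would apply Lemma \ref{lem:corr} separately to each $\v_j$ with parameter $\delta/n$ and take a union bound; with probability at least $1-\delta$ one has $(\v_j^{\top}\z_1)^2\geq \delta/(9n^2)$ simultaneously for every $j\in[n]$, and hence $-\ln((\v_j^{\top}\z_1)^2)\leq \ln(9n^2/\delta)$ for all $j$. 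Taking the $p^\star$-weighted combination of the $n$ bounds collapses the log term (since it no longer depends on $j$) and produces
\begin{align*}
\sum_{t=1}^T\bigl(f(\z_t)-f(\x^\star)\bigr) \leq G\!\left(\frac{3\mu T}{2}+\frac{\ln(9n^2/\delta)}{2\mu}\right).
\end{align*}
Substituting the prescribed $\mu=\sqrt{\ln(9n^2/\delta)/(3T)}$ balances the two terms to $G\sqrt{3T\ln(9n^2/\delta)}$, and dividing by $T$ together with $\min_{t}f(\z_t)\le\tfrac{1}{T}\sum_t f(\z_t)$ yields the claimed bound.

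The main obstacle I anticipate is the first reduction: verifying that an optimal $\x^\star$ can be taken in the form $\sum_j\sqrt{p_j^\star}\,\v_j$, so that the quadratic form $(\x^\star)^{\top}\G_t\x^\star$ decouples diagonally in the common eigenbasis and the per-eigenvector Oja regret bounds of Theorem \ref{thm:oja2} can be aggregated against an arbitrary optimizer. Once this symmetrization is in place, the remainder is a clean combination of subgradient descent for the SDP relaxation, Theorem \ref{thm:oja2}, and a union-bounded application of Lemma \ref{lem:corr}.
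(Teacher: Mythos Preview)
Your proposal is correct and follows essentially the same strategy as the paper: couple Theorem~\ref{thm:oja2} (applied once per common eigenvector $\v_j$) with the subgradient inequality for the convex relaxation \eqref{eq:sdprelax}, then control the initialization term via Lemma~\ref{lem:corr} with a union bound over the $n$ eigenvectors and optimize $\mu$. The only cosmetic difference is in how the per-eigenvector regret bounds are aggregated against the optimizer: the paper compares against the SDP minimizer $\X^*$ and uses that $\min_{\X\succeq 0,\,\trace(\X)=1}\trace\bigl(\X(-\sum_t\G_t)\bigr)$ is attained at some $\v_i\v_i^{\top}$ (so the worst eigenvector suffices), whereas you compare against the rank-one lift $\x^\star(\x^\star)^{\top}$ of a primal optimizer and take the $p^\star$-weighted average of the $n$ bounds. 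Both reductions land on the same inequality.

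Your flagged ``main obstacle'' is not actually an obstacle: for \emph{any} unit vector $\x^\star$, expanding $\x^\star=\sum_j c_j\v_j$ in the common orthonormal eigenbasis gives $(\x^\star)^{\top}\G_t\x^\star=\sum_j c_j^2\,\v_j^{\top}\G_t\v_j$ automatically, because each $\G_t$ is a real linear combination of the $\A_i$'s and hence diagonal in this basis, so all cross terms $\v_j^{\top}\G_t\v_k$ with $j\neq k$ vanish. No special ``nonnegative coefficient'' form of $\x^\star$ is needed; you may simply set $p_j^\star=(\v_j^{\top}\x^\star)^2$.
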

\begin{proof}
The proof follows from coupling Theorem \ref{thm:oja2} with a standard online-to-batch argument w.r.t. to the convex relaxation \eqref{eq:sdprelax}.
First, note that under our assumption on the spectral norms of $\A_1,\dots,\A_m$ and the Lipchitz parameter $G$, it follows that for any $t\in[T]$, $\Vert{\G_t}\Vert_2 \leq 1$. Let us apply Theorem \ref{thm:oja2} $n$ times w.r.t. the sequence of matrices $\G_1,\dots,\G_T$, each time w.r.t. one of the  eigenvectors $\v_i$, $i=1,\dots,n$ common to $\A_1,\dots,\A_m$. This yields after slight rearrangement,
\begin{align*}
\forall i\in[n]:\quad \frac{1}{G}\sum_{t=1}^T\trace\left({\left({\z_t\z_t^{\top}-\v_i\v_i^{\top}}\right)\left({-G\cdot \G_t}\right)}\right) \leq \frac{3\mu}{2}{}T - \frac{\ln(\z_1^{\top}\v_i)}{2\mu}.
\end{align*}

Since each $\G_t$ is a linear combination of $\A_1,...,\A_m$, it follows that $\G_1,\dots,\G_T$ also admit the common eigenvectors $\v_1,\dots,\v_n$. This in particular implies that
\begin{align*}
\min_{\X\in\mbS^n:~\X\succeq 0, \trace(\X)=1}\trace\left({\X\left({ - \sum_{t=1}^T\G_t}\right)}\right) = \min_{i\in[n]}\trace\left({\v_i\v_i^{\top}\left({ - \sum_{t=1}^T\G_t}\right)}\right).
\end{align*}

Thus, denoting by $\X^*$ an optimal solution to the convex relaxation \eqref{eq:sdprelax}, we have that

\begin{align*}
\frac{1}{G}\sum_{t=1}^T\trace\left({\left({\z_t\z_t^{\top}-\X^*}\right)\left({ -G\cdot \G_t}\right)}\right) \leq \frac{3\mu}{2}{}T - \min_{i\in[n]}\frac{\ln((\z_1^{\top}\v_i)^2)}{2\mu}.
\end{align*}
Note that for all $t\in[T]$, $-G\cdot\G_t\in\partial{}F(\z_t\z_t^{\top})$, where $F$ is as defined in \eqref{eq:sdprelax} . Thus, using the convexity of $F$ we have that

\begin{align*}
\frac{1}{T}\sum_{t=1}^TF(\z_t\z_t^{\top}) - F(\X^*) \leq \frac{3\mu{}G}{2} - G\min_{i\in[n]}\frac{\ln((\z_1^{\top}\v_i)^2)}{2\mu{}T}.
\end{align*}

Using Lemma \ref{lem:corr} and the union bound w.r.t. $\v_1,\dots,\v_n$, we have that with probability at least $1-\delta$, $\min_{i\in[n]}(\z_1^{\top}\v_i)^2 \geq \frac{\delta}{9n^2}$. Thus, setting $\mu = \sqrt{\frac{1}{3T}\ln\left({9n^2/\delta}\right)}$, we have that with probability at least $1-\delta$,

\begin{align}\label{eq:convres}
\min_{t\in[T]}f(\z_t) - \min_{\x\in\reals^n:\Vert{\x}\Vert_2=1}f(\x) &\leq \min_{t\in[T]}F(\z_t\z_t^{\top}) - F(\X^*) \leq G\sqrt{\frac{3\ln\left({9n^2/\delta}\right)}{T}}.
\end{align}
\end{proof}

\begin{remark}
The second inequality in \eqref{eq:convres} shows in fact that the convex relaxation \eqref{eq:sdprelax} always admits an optimal rank-one solution. Note that this solution however need not be one the eigenvectors in common to $\A_1,\dots,\A_m$. Consider for instance the case $n=m=2$ with $\A_1 = \left( \begin{array}{cc}
1 & 0   \\
0 & 0   \end{array} \right)$, $\A_2 = \left( \begin{array}{cc}
0 & 0   \\
0 & 1   \end{array} \right)$, and $g(\y) = \max_{i\in\{1,2\}}\y(i)$. In this case the common eigenvectors are the standard basis vectors $\e_1, \e_2\in\reals^2$, however it is easy to see that the optimal solutions to Problem \eqref{eq:optprob} in this case are $\left( \begin{array}{c}
\pm 1/\sqrt{2}    \\
\pm 1/\sqrt{2}    \end{array} \right)$.
\end{remark}
\begin{remark}
If the common eigenvectors $\v_1,\dots,\v_n$ of $\A_1,\dots,\A_m$ were known, then by diagonalizing the matrices $\A_1,\dots,\A_m$, Problem \eqref{eq:optprob} could in principle be transformed into the equivalent nonconvex optimization problem $\min_{\y\in\reals^n:\Vert{\y}\Vert_2=1}g(\mB(\y)$, where $\mB:\reals^n\rightarrow\reals^m$ satisfies $(\mB(\y))(i) = \sum_{j=1}^n\lambda_i(j)\y(j)^2$, $i=1,\dots,m$, where for all $i\in[m], j\in[n]$, $\lambda_i(j)$ is the eigenvalue of $\A_i$ corresponding to the eigenvector $\v_j$. Moreover, since for any unit vector $\y\in\reals^n$, the vector $(\y(1)^2,\dots,\y(n)^2)$ is in the unit simplex in $\reals^n$, which we denote by $\Delta_n$, Problem \eqref{eq:optprob} could be further transformed into the convex simplex-constrained optimization problem $\min_{\w\in\Delta_n}g(\Lambda\w)$, where $\Lambda\in\reals^{m\times n}$ is such that $\Lambda(i,j)=\lambda_i(j)$ for all $i\in[m], j\in[n]$. Thus, in case the eigen-basis $\v_1,\dots,\v_n$ is known, Problem  \eqref{eq:optprob} could be solved via the projected subgradient descent method \cite{beck2017first} over the simplex. Interestingly, not only does Algorithm \ref{alg:ojaconv} does not require to compute or store the underlying eigen-basis, but also in terms of the computational complexity of the projection step it is more efficient, requiring to project onto the unit sphere (in $O(n)$ time) rather than on the simplex which requires $O(n\log{n})$ time.
\end{remark}
\subsection{Online learning of eigenvectors with linear memory and runtime?}

In the online learning of eigenvectors problem or online PCA (with a single principal component) \cite{pmlr-v37-garberb15, nie2016online, allen2017follow, pmlr-v99-carmon19a, garber2019regret} which is a sequential prediction task within the framework of online learning, a decision maker (DM) is required to iteratively choose some unit vector $\z_t\in\reals^n$. After choosing her vector, a positive-semidefinite matrix $\A_t\in\mbS^n_{+}$ is revealed and the DM obtains payoff of $\z_t^{\top}\A_t\z_t$. The goal is to guarantee sublinear (in $T$) regret, that is, to have 
\begin{align*}
\lambda_1\left({\sum_{t=1}^T\A_t}\right) - \sum_{t=1}^T\z_t^{\top}\A_t\z_t = o(T).
\end{align*}
Since the payoff functions are not concave but convex in the decision variable and the feasible set is also non-convex, in its natural formulation this problem does not fall directly into the paradigm of online convex optimization which allows for efficient online algorithms \cite{shalev2012online, hazan2016introduction}. Nevertheless, by considering a natural semidefinite relaxation, in which the decision variable $\z_t$ is replaced with a positive semidefinite unit-trace matrix $\Z_t$ (which yields a convex feasible set), and accordingly the payoff on each round $t$ becomes the linear (in $\Z_t$) function $\trace(\Z_t\A_t)$, one can obtain a convex formulation \cite{pmlr-v37-garberb15, nie2016online, allen2017follow, pmlr-v99-carmon19a, garber2019regret}. While significant progress on efficient algorithms for this problem has been made in recent years, in particular with the works \cite{allen2017follow, pmlr-v99-carmon19a}, all algorithms which guarantee sublinear regret require to store $n\times n$ matrices in memory and a leading eigenvector computation or a similarly expensive exponentiated matrix-vector product on each round  \cite{allen2017follow, pmlr-v99-carmon19a}.

A natural question, which was already proposed in \cite{pmlr-v37-garberb15, garber2019regret}, is whether Oja's algorithm (Algorithm \ref{alg:oja}), which only maintains a vector in $\reals^n$ in memory and requires only the first-order information $\A_t\z_t$ on each round $t$ (without even having full access to  $\A_t$) could be proved to guarantee  sublinear regret. 
Note that if the payoff matrices are chosen \textit{adaptively}, i.e., $\A_t$ may depend on previous actions, then the answer is trivially negative since upon observing the first vector $\z_1$, one can choose $\A_1,\dots,\A_T$ to be orthogonal to $\z_1$, in which case Oja's algorithm trivially fails. However, the question remain open in the \textit{oblivious} setting, in which $\A_1,\dots,\A_T$ are chosen beforehand and without knowledge of the initial (random) vector $\z_1$. 

\begin{conjecture}
For any $T$ large enough and any fixed step-size $\mu$, there exists a sequence of matrices $\A_1,\dots\A_T$ in $\mbS^n$ with spectral norm at most $1$, such that Oja's algorithm (Algorithm \ref{alg:oja})  incurs linear (in  $T$) regret  with at least constant probability.
\end{conjecture}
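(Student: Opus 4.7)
My plan is to exhibit a deterministic, oblivious bad instance in $n=2$ (which embeds trivially into any $n\geq 2$ by padding the remaining block with zeros): a slowly rotating rank-one sequence $\A_t=\u_t\u_t^\top$ with $\u_t=(\cos(\alpha t),\sin(\alpha t))^\top$ for a rotation angle $\alpha=\alpha(\mu)\in(0,\pi/2)$ to be specified. Each $\A_t$ is symmetric with spectral norm $1$. Writing $\u_t\u_t^\top=\tfrac12\I+\tfrac12 R(2\alpha t)$ with $R(\theta)$ a unit-norm trace-zero $2\times 2$ matrix and summing the geometric series $\sum_{t=1}^T e^{i2\alpha t}$, one obtains $\lambda_1(\sum_{t=1}^T\A_t)=T/2+O(1/\sin\alpha)$, so the hindsight benchmark is at least $T/2$ for all large $T$.

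\paragraph{Reduction to a 1D dynamical system.} Writing $\z_t=(\cos\theta_t,\sin\theta_t)^\top$ and defining the lag $\eta_t:=\alpha t-\theta_t$, the Oja update scales the component of $\w_t$ along $\u_t$ by $1+\mu$ and leaves its orthogonal component unchanged, while the next target $\u_{t+1}$ is rotated by $\alpha$. This produces the scalar recursion on the circle (mod $\pi$)
\begin{equation*}
\eta_{t+1}\;=\;\alpha+\arctan\!\Big(\tfrac{\tan\eta_t}{1+\mu}\Big).
\end{equation*}
Fixed points $x^*=\tan\eta^*$ satisfy the quadratic $(\tan\alpha)x^2-\mu x+(1+\mu)\tan\alpha=0$, which admits real roots iff $\tan\alpha\leq\mu/(2\sqrt{1+\mu})$; the smaller root is then stable (derivative of the map strictly below $1$), the larger is unstable. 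A short algebraic computation shows that for $\tan\alpha$ strictly inside the open interval $\bigl(\mu/(2+\mu),\,\mu/(2\sqrt{1+\mu})\bigr)$, which is nonempty for every $\mu>0$, the stable root $x^*$ exceeds $1$, so $\eta^*>\pi/4$ and
\begin{equation*}
\cos^2\eta^*\;=\;\tfrac{1}{1+x^{*2}}\;\leq\;\tfrac12-c_\mu
\end{equation*}
for a constant $c_\mu>0$ depending only on $\mu$. I would choose $\alpha$ in the interior of this interval, say $\tan\alpha=\tfrac{1}{2}\bigl(\mu/(2+\mu)+\mu/(2\sqrt{1+\mu})\bigr)$.

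\paragraph{Finishing argument and main obstacle.} To conclude, I would show that with at least constant probability over the uniform initialization $\z_1$, the lag $\eta_t$ enters an $\epsilon$-neighborhood of $\eta^*$ within a transient of $T_0=O(\log T/\mu)$ rounds and remains there. Once this holds, the per-round payoff $\z_t^\top\A_t\z_t=\cos^2\eta_t\leq\tfrac12-c_\mu+O(\epsilon)$, and summing yields
$\sum_{t=1}^T\z_t^\top\A_t\z_t \leq T\bigl(\tfrac12-c_\mu\bigr)+O(T_0)$, which combined with the hindsight benchmark $\geq T/2$ gives regret $\geq c_\mu T-O(T_0)=\Omega(T)$. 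The embedding to $n>2$ is immediate: with constant probability the initial $\w_1$ has $\Omega(1/n)$ mass on the first two coordinates, and since the update leaves coordinates $3,\dots,n$ untouched while amplifying coordinates $1,2$ geometrically, the $2$-dimensional analysis applies after an additional transient of $O(\log n/\mu)$ rounds. The most delicate point is the transient analysis---ruling out that a nonnegligible angular measure of initializations lingers near the unstable fixed point $\eta_{\mathrm{un}}$ for too many rounds. This should follow from a standard smooth 1D hyperbolic dynamics argument: since the Jacobian of the map at $\eta_{\mathrm{un}}$ is strictly greater than $1$, the Lebesgue measure of initial angles whose orbit remains within $\epsilon$ of $\eta_{\mathrm{un}}$ for $k$ steps decays geometrically in $k$, so excluding an angular sliver of measure $1/\mathrm{poly}(T)$ around $\eta_{\mathrm{un}}$ suffices to give constant-probability convergence within $T_0=O(\log T/\mu)$ rounds.
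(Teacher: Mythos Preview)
The paper does not prove this statement --- it is posed as an open conjecture and explicitly left unresolved. The paper's only contribution on this front is negative evidence against naive constructions: Theorems~\ref{thm:oja} and~\ref{thm:oja2} show that any sequence $\A_1,\dots,\A_T$ sharing common eigenvectors yields $O(\sqrt{T\log n})$ regret for Oja, so the text concludes that ``proving the conjecture, if true, requires more sophisticated constructions.'' There is thus no paper proof to compare against; your proposal is an attempt to settle what the paper leaves open.

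Your rotating rank-one construction is precisely the kind of instance the paper's positive results do not cover, since $\u_t\u_t^{\top}$ and $\u_{t+1}\u_{t+1}^{\top}$ do not commute. The reduction to the scalar lag recursion $\eta_{t+1}=\alpha+\arctan\!\big(\tan\eta_t/(1+\mu)\big)$, the fixed-point quadratic, and the interval $\tan\alpha\in\big(\mu/(2+\mu),\,\mu/(2\sqrt{1+\mu})\big)$ forcing the stable lag to satisfy $\eta^*>\pi/4$ all check out; indeed the product of the two roots is $1+\mu$, so the smaller root automatically satisfies $x_-^2<1+\mu$, which is exactly the stability condition (derivative $<1$). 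The $n>2$ embedding is actually cleaner than you state: the first two coordinates of $\w_t$ evolve exactly as in the $2$D problem regardless of the remaining coordinates, and $\z_t^{\top}\A_t\z_t = \big(\Vert(\w_t)_{1:2}\Vert_2^2/\Vert\w_t\Vert_2^2\big)\cos^2\eta_t \leq \cos^2\eta_t$, so the $2$D payoff upper bound transfers directly without any high-dimensional transient.

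The genuine remaining work is the transient analysis you flag. Your sketch is the right shape: the lag map is an orientation-preserving circle diffeomorphism on $\reals/\pi\mathbb{Z}$ with one attracting and one repelling fixed point, so orbits starting outside a neighborhood of $\eta_{\mathrm{un}}$ of measure $O(\lambda_{\mathrm{un}}^{-k})$ reach an $\epsilon$-neighborhood of $\eta^*$ within $k+O_\mu(\log(1/\epsilon))$ steps, giving probability $1-O(1/T)$ of linear regret. Two small caveats worth recording: (i) the admissible interval for $\tan\alpha$ has width $\Theta(\mu^3)$ and yields $c_\mu=\Theta(\mu)$ as $\mu\to 0$, so the linear-regret constant is $\mu$-dependent (permitted by the conjecture); (ii) your construction degenerates at $\mu=0$, but that case is trivial via $\A_t\equiv\e_1\e_1^{\top}$.
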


Since Oja's algorithm only maintains a one-dimensional subspace at each time, it might seem that a simple construction based on a fixed orthonormal basis, i.e., setting $\A_t = \v_i\v_i^{\top}$ for some $i\in[n]$ and a fixed orthonormal set $\v_1,\dots,\v_n$, will be sufficient to prove the conjecture. However, similarly to the analysis in Section \ref{sec:EVapprox}, Theorems \ref{thm:oja} and \ref{thm:oja2} (together with Lemma \ref{lem:corr}) imply that with a suitable choice of step-size, even in a more general case that $\A_1,\dots,\A_T$ have common eigenvectors (but not necessarily rank-one), Oja's algorithm indeed guarantees sublinear regret (scaling with $\sqrt{T}$) with high-probability. Thus, proving the conjecture, if true, requires more sophisticated constructions.



\section{Discussion and Related Work}

Our observation regarding the equivalence of Oja's algorithm to MW (in case of common eigenvectors) could be related to a recent line of results \cite{gunasekar2017implicit, amid2020winnowing, amid2020reparameterizing, ghai2022non} that studied equivalences between (Euclidean) gradient descent and non-Euclidean mirror descent on a  transformed problem or their continuous-time counterparts, however, to the best of our knowledge the spectral setting of optimization with quadratic forms over the unit sphere discussed here, was not previously considered. 

Also relevant to the problems discussed here, is the well known matrix generalization of the MW method, namely the matrix multiplicative weights algorithm (MMW), see for instance \cite{tsuda2005matrix, warmuth2006online, arora2007combinatorial, JMLR:v9:warmuth08a, allen2017follow, pmlr-v99-carmon19a}. MMW is an online algorithm for optimization over the the set of unit-trace real positive semidefinite matrices, which naturally underlies convex relaxations for problems such as considered here, e.g, the convex relaxation \eqref{eq:sdprelax}. This algorithm guarantees regret bounds that hold for any sequence of bounded symmetric matrices, in particular without further assumption on  common eigenvectors. However, it is significantly more complex, both in terms of computational complexity, requiring to store $n\times n$ matrices in memory and computing either an eigen-decomposition or a product of an exponentiated matrix with a vector on each iteration (both of which are more complex than the fundamental problem of leading eigenvalue computation which is one of our main motivations here) \cite{allen2017follow, pmlr-v99-carmon19a}, and it relies on a  more involved regret analysis than the elementary proof of its simplex counterpart MW.  It is worth noting that recently \cite{allen2017follow} used a MMW-inspired approach to prove the convergence of Oja's algorithm in the stochastic setting (discussed at the introduction) with an elegant and relatively short proof, but not as elementary as the arguments brought here. Extending our MW-based approach to this stochastic setting (without the simplifying assumption that the individual samples share common eigenvectors) seems difficult and given the already short and accessible proof of \cite{allen2017follow} we do not pursue this direction.

Finally, given some $\A\in\mbS^n$, one is often not only interested in computing the leading eigenvector, but the entire leading subspace of some dimension $k \leq n$, i.e., to find vectors $\z_1,\dots,\z_k\in\reals^n$ such that $\textrm{span}\{\z_1,\dots,\z_k\} = \textrm{span}\{\v_1,\dots,\v_k\}$, where $\v_1,\dots,\v_k$ are eigenvectors of $\A$ corresponding to the top $k$ eigenvalues. While the power method admits a well known generalization for extracting the top subspace which is based on the incorporation of a QR-decomposition step on each iteration (see for instance  \cite{golub2013matrix, saad2011numerical}), it seems difficult to extend the arguments brought here (for instance using the  ideas in \cite{JMLR:v9:warmuth08a} which generalized the problem of prediction with expert advice to prediction with a subset of experts) to this setting and it is left as a possible direction for future work.

\bibliography{bibs}
\bibliographystyle{plain}

\end{document}